\def\arXiv#1{\href{http://arxiv.org/abs/#1}{arXiv:#1}}
\newcolumntype{P}[1]{>{\centering\arraybackslash}m{#1}}
\def\wrtext#1{\relax\ifmmode{\leavevmode\hbox{#1}}\else{#1}\fi}
\def\abs#1{\left|#1\right|}
\def\?[#1]{\textbf{[#1]}\marginpar{\Large{\textbf{??}}}}
\let\epsilon=\varepsilon 
\def\norm#1{||\,#1\,||}
\newtheorem{theo}{Theorem}
\newtheorem{prop}{Proposition}[section]
\newtheorem{lemm}[prop]{Lemma}
\numberwithin{equation}{section}
\newcommand\reallywidehat[1]{\arraycolsep=0pt\relax%
\begin{array}{c}
\stretchto{
  \scaleto{
    \scalerel*[\widthof{\ensuremath{#1}}]{\kern-.5pt\bigwedge\kern-.5pt}
    {\rule[-\textheight/2]{1ex}{\textheight}} 
  }{\textheight} %
}{0.5ex}\\           
#1\\                 
\rule{-1ex}{0ex}
\end{array}
}
\begin{document}

\title[Pullback operators on Bargmann spaces]{Pullback operators on Bargmann spaces}

\author{Reid Johnson}
\address{Department of Mathematics, University of California, Los Angeles, CA 90095, USA.}
\email{reid@math.ucla.edu}

\begin{abstract}
We characterize boundedness and compactness of pullback operators under holomorphic maps between Bargmann spaces of entire holomorphic functions with quadratic strictly plurisubharmonic exponential weights, extending the result of~\cite{model}, obtained in the case of the radial quadratic weight. We also show that the pullback operator between Bargmann spaces is compact precisely when it is of trace class, with sub-exponentially decaying singular values.
\end{abstract}

\maketitle

\section{Introduction and statement of results}
\medskip
\noindent
The study of pullback (composition) operators on spaces of holomorphic functions has a long tradition in complex analysis and operator theory~\cite{CoMc},~\cite{GoHe}. In the setting of the radial Bargmann space
\begin{equation}
\label{eq1.0.1}
H_{\Phi_0}(\mathbb C^n) = {\rm Hol}(\mathbb C^n)\cap L^2(\mathbb C^n, e^{-2\Phi_0}\, L(dx)), \quad \Phi_0(x) = \frac{1}{4}\abs{x}^2,
\end{equation}
where $L(dx)$ stands for the Lebesgue measure on $\mathbb{C}^{n}$, such operators were considered in~\cite{model}, and a complete characterization of bounded and compact pullback operators on $H_{\Phi_0}(\mathbb C^n)$ has been established in~\cite{model}. Now in numerous situations one naturally encounters Bargmann spaces of form (\ref{eq1.0.1}), where the radial weight $\Phi_0$ is replaced by a more general strictly plurisubharmonic quadratic form $\Phi$ on $\mathbb C^n$,
\begin{equation}
\label{eq1.1}
\sum_{k,\ell=1}^n \frac{\partial^2 \Phi(x)}{\partial x_k \partial \overline{x}_{\ell}} \zeta_k \overline{\zeta}_{\ell} > 0, \quad x\in \mathbb C^n,\,\,\,0 \neq \zeta \in \mathbb C^n.
\end{equation}
Here and in what follows we use the standard notation
\[
\partial_{x_j} = \frac{1}{2}\left(\partial_{{\rm Re}\, x_j} - i \partial_{{\rm Im}\, x_j}\right), \quad \partial_{\overline{x}_j} = \frac{1}{2}\left(\partial_{{\rm Re}\, x_j} + i \partial_{{\rm Im}\, x_j}\right),\quad 1\leq j \leq n.
\]
In the theory of metaplectic FBI transforms~\cite{Sj95},~\cite{HiSj15}, Bargmann spaces
\begin{equation}
\label{eq1.1.1}
H_{\Phi}(\mathbb C^n) = {\rm Hol}(\mathbb C^n)\cap L^2(\mathbb C^n, e^{-2\Phi}\, L(dx)),
\end{equation}
where $\Phi$ is a strictly plurisubharmonic quadratic form on $\mathbb{C}^n$, occur naturally as the image spaces of such transforms acting on $L^2(\mathbb R^n)$, and the flexibility afforded by this approach has been very useful in several works on spectral theory of non-self-adjoint operators,~\cite{Hi04},~\cite{HiSjVi13},~\cite{CGHS},~\cite{HiPrVi},~\cite{Vi22}, in particular. It seems therefore natural to try to extend the results of~\cite{model} to pullback operators under holomorphic maps acting on Bargmann spaces of the form (\ref{eq1.1.1}), with $\Phi$ quadratic strictly plurisubharmonic, and it is precisely the purpose of this note to do so. Along the way, we shall be able to sharpen the description of pullback operators that are compact, and it also seems that our proofs may be of some independent interest even when specialized to the radial case (\ref{eq1.0.1}). Let us now proceed to describe the assumptions and state the main results of this work.

\medskip
\noindent
Let $\Phi_1$, $\Phi_2$ be strictly plurisubharmonic quadratic forms on $\mathbb{C}^{n_1}$, $\mathbb{C}^{n_2}$, respectively. Associated to the quadratic forms $\Phi_j$ are the corresponding Bargmann spaces $H_{\Phi_j}(\mathbb C^{n_j})$, $j=1,2$, defined as in (\ref{eq1.1.1}).
Let $\varphi:\mathbb{C}^{n_2}\rightarrow\mathbb{C}^{n_1}$ be a holomorphic map and let us define the corresponding pullback (composition) operator
\begin{equation}
\label{eq1.2}
C_\varphi u :=u\circ\varphi,
\end{equation}
for $u:\mathbb{C}^{n_1}\rightarrow\mathbb{C}$ holomorphic. It will be fruitful to view $C_{\varphi}$ as a metaplectic Fourier integral operator in the complex domain, see~\cite[Appendix B]{CGHS} and the discussion in Section \ref{suff_cond} below. Our first result, which is an extension of~\cite[Theorem 1, Theorem 2]{model}, is as follows.

\begin{theo}
\label{theo 1}
The pullback operator
\[
C_\varphi: H_{\Phi_1}(\mathbb{C}^{n_1}) \rightarrow H_{\Phi_2}(\mathbb{C}^{n_2})
\]
is bounded if and only if the following conditions are satisfied:
\begin{enumerate}
\item \label{affine}There exist a complex $n_1\times n_2$ matrix $A$ and $b\in\mathbb{C}^{n_1}$ such that
\begin{equation}
\label{eq1.3}
\varphi(x)=Ax+b, \quad x\in \mathbb C^{n_2}.
\end{equation}
\item \label{kernel}The matrix $A$ satisfies \begin{equation}
\label{kernel condition}
    \Phi_2(x) \leq 0 \Longrightarrow Ax \neq 0, \quad 0 \neq x\in\mathbb{C}^{n_2}.
\end{equation}
\item \label{bounded} We have
\begin{equation}
\label{eq1.4}
e^{\Phi_1\circ\varphi-\Phi_2} \in L^{\infty}(\mathbb C^{n_2}).
\end{equation}
\end{enumerate}
\end{theo}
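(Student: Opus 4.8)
\smallskip
\noindent The plan is to reduce both implications to the model Bargmann spaces $H_{\Phi_0}(\mathbb C^{n_j})$, $\Phi_0(z)=\tfrac14\abs{z}^2$. Since $\Phi_j$ is strictly plurisubharmonic, its Hermitian part $H_j$ is positive definite, so one can choose a complex‑linear isomorphism $\ell_j$ with $(\ell_j^{-1})^*\ell_j^{-1}=4H_j$ and a holomorphic quadratic form $g_j$ with $\Phi_j=\Phi_0\circ\ell_j^{-1}+\Re g_j$; then $U_jf:=\abs{\det\ell_j}\,e^{-g_j(\ell_j\,\cdot)}f(\ell_j\,\cdot)$ is a unitary isomorphism $H_{\Phi_j}(\mathbb C^{n_j})\to H_{\Phi_0}(\mathbb C^{n_j})$ (a metaplectic change of FBI weight; see~\cite{Sj95},~\cite{HiSj15},~\cite{CGHS}). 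A short computation identifies $\widetilde C:=U_2C_\varphi U_1^{-1}$ with the weighted composition operator $\widetilde Cv=e^{G}\,(v\circ\widetilde\varphi)$ on the model spaces, where $\widetilde\varphi:=\ell_1^{-1}\circ\varphi\circ\ell_2$ (affine if and only if $\varphi$ is) and $G$ is entire; and $C_\varphi$ is bounded if and only if $\widetilde C$ is. I will also use freely that $H_\Phi$ is a reproducing kernel Hilbert space with $\norm{K^\Phi_z}^2=c_\Phi\,e^{2\Phi(z)}$, $c_\Phi>0$, for any strictly plurisubharmonic quadratic form $\Phi$.

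\smallskip
\noindent\emph{Necessity.} Suppose $C_\varphi$ is bounded. The identity $\langle u,C_\varphi^*K^{\Phi_2}_x\rangle=(C_\varphi u)(x)=u(\varphi(x))=\langle u,K^{\Phi_1}_{\varphi(x)}\rangle$ gives $C_\varphi^*K^{\Phi_2}_x=K^{\Phi_1}_{\varphi(x)}$; taking norms and using $\norm{K^{\Phi_j}_z}^2=c_{\Phi_j}e^{2\Phi_j(z)}$ yields $\Phi_1(\varphi(x))-\Phi_2(x)\le\tfrac12\log\!\bigl(\norm{C_\varphi}^2c_{\Phi_2}/c_{\Phi_1}\bigr)$ for all $x$, which is exactly~\eqref{eq1.4}. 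For condition (1) I pass to $\widetilde C$: the analogous identity $\widetilde C^*K^{\Phi_0}_x=\overline{e^{G(x)}}\,K^{\Phi_0}_{\widetilde\varphi(x)}$ gives $\Re G(x)+\tfrac14\abs{\widetilde\varphi(x)}^2-\tfrac14\abs{x}^2\le C$; dropping the nonnegative middle term leaves $\Re G(x)\le\tfrac14\abs{x}^2+C$, so a Liouville‑type (Borel--Carath\'eodory) argument forces $G$ to be a polynomial of degree $\le2$, whereupon $\tfrac14\abs{\widetilde\varphi(x)}^2\le\tfrac14\abs{x}^2-\Re G(x)+C\lesssim1+\abs{x}^2$; a holomorphic map of at most linear growth is affine, so $\widetilde\varphi$, hence $\varphi$, is affine. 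Finally write $\varphi(x)=Ax+b$ and suppose (2) fails, i.e.\ $Ax_0=0$ and $\Phi_2(x_0)\le0$ for some $x_0\ne0$; then $\Phi_2|_{\ker A}$ is not positive definite as a real quadratic form. Taking $u=K^{\Phi_1}_0\in H_{\Phi_1}$ (a nowhere‑vanishing Gaussian) and splitting $\mathbb C^{n_2}=\ker A\oplus W$, the value $u(\varphi(\kappa+\xi))=u(A\xi+b)$ is independent of $\kappa$, so by Tonelli
\[
\norm{C_\varphi u}_{H_{\Phi_2}}^2=\int_W\abs{u(A\xi+b)}^2\Bigl(\int_{\ker A}e^{-2\Phi_2(\kappa+\xi)}\,L(d\kappa)\Bigr)L(d\xi)=+\infty,
\]
because the inner Gaussian integral diverges for every $\xi$. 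Thus $u\circ\varphi\notin H_{\Phi_2}$, contradicting boundedness, so (2) must hold.

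\smallskip
\noindent\emph{Sufficiency.} Assume (1)--(3), write $\varphi(x)=Ax+b$, $R:=\operatorname{ran}A$, and split $\mathbb C^{n_2}=\ker A\oplus W$ with $A|_W\colon W\to R$ an isomorphism. As above, Tonelli gives
\[
\norm{C_\varphi u}_{H_{\Phi_2}}^2=\int_W\abs{u(A\xi+b)}^2\Bigl(\int_{\ker A}e^{-2\Phi_2(\kappa+\xi)}\,L(d\kappa)\Bigr)L(d\xi).
\]
By (2), $\Phi_2|_{\ker A}$ is positive definite, so the inner integral equals $C_0\,e^{-2\rho(\xi)}$ with $C_0$ independent of $\xi$ and $\rho(\xi):=\min_{\kappa\in\ker A}\Phi_2(\kappa+\xi)$. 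Changing variables $y=A\xi+b$ over the complex‑affine subspace $b+R\subseteq\mathbb C^{n_1}$ turns this into $\norm{C_\varphi u}_{H_{\Phi_2}}^2=C_1\int_{b+R}\abs{u(y)}^2e^{-2\widetilde\rho(y)}\,L_R(dy)$ with $\widetilde\rho(y):=\rho((A|_W)^{-1}(y-b))$. Since $A\kappa=0$, condition (3) reads $\Phi_1(A\zeta+b)-\Phi_2(\kappa+\zeta)\le C$ for all $\kappa,\zeta$; minimizing over $\kappa$ gives $\widetilde\rho(y)\ge\Phi_1(y)-C$ on $b+R$, whence $\norm{C_\varphi u}_{H_{\Phi_2}}^2\le C_1e^{2C}\int_{b+R}\abs{u(y)}^2e^{-2\Phi_1(y)}\,L_R(dy)$. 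Finally, $\Phi_1|_{b+R}$ is a strictly plurisubharmonic quadratic form on $b+R$, and the restriction $H_{\Phi_1}(\mathbb C^{n_1})\to H_{\Phi_1|_{b+R}}(b+R)$ is bounded (a standard Bargmann--Fock restriction estimate onto a complex‑affine subspace, cleanest after reducing to the model space); so the last integral is $\lesssim\norm{u}_{H_{\Phi_1}}^2$, and $C_\varphi$ is bounded.

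\smallskip
\noindent\emph{Main obstacle.} I expect the hard part to be the necessity of (1): nonaffine $\varphi$ cannot be excluded by a naive growth estimate because the weights $\Phi_j$ need not be convex as real quadratic forms, and this is exactly why the reduction to the model space --- where $\Phi_0$ is convex and where the extra holomorphic weight $e^{G}$ produced by the reduction can itself be controlled by a Liouville argument --- is essential. On the sufficiency side the only non‑bookkeeping ingredient is the restriction estimate for model Bargmann spaces onto complex‑affine subspaces.
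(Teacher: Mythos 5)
Your proof is correct, and on the sufficiency side it takes a genuinely different route from the paper's. For necessity, your treatment of condition (\ref{bounded}) via $C_\varphi^*$ acting on normalized reproducing kernels is exactly the paper's argument; for condition (\ref{affine}) the paper stays with the decomposition $\Phi_1=\Phi_{\rm herm}+\Re f$ and kills the pluriharmonic term $\Re f(\varphi(x))$ by averaging over balls (mean value property), whereas you first conjugate to the model weight and then run a Borel--Carath\'eodory/Liouville argument on the induced holomorphic multiplier $G$ --- both are Liouville-type arguments for a pluriharmonic function dominated above by a quadratic, and both work; for condition (\ref{kernel}) your test function $K^{\Phi_1}_0$ is, up to a constant, the paper's $e^{f}$, with the divergence read off by Tonelli rather than from the sign of the principal part of $\Re f(Ax)-\Phi_2(x)$. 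The real divergence is in sufficiency: the paper writes $C_\varphi=C_\varphi\circ\Pi_{\Phi_1}$, treats the resulting kernel as a metaplectic FIO kernel, and applies Schur's lemma, with condition (\ref{kernel}) entering through the Gaussian decay of the kernel in the $\ker A$ directions; you instead compute $\norm{C_\varphi u}^2_{H_{\Phi_2}}$ exactly by Tonelli, evaluate the fiber integral over $\ker A$ by completing the square (this is precisely the paper's Lemma \ref{lemm_Taylor}, and positive definiteness of $\Phi_2|_{\ker A}$, i.e.\ condition (\ref{kernel}), is exactly what makes that fiber integral finite), and reduce to a restriction estimate for $H_{\Phi_1}(\mathbb C^{n_1})$ onto the complex affine subspace $b+\operatorname{ran}A$. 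Your route is more elementary and makes the roles of conditions (\ref{kernel}) and (\ref{bounded}) completely transparent, identifying $C_\varphi$ as essentially a restriction operator; the paper's route buys the complex FIO framework that is reused for the compactness analysis in Section \ref{sect_compact}. The one step you defer --- boundedness of the restriction map onto $b+\operatorname{ran}A$ --- is indeed standard, but for completeness it follows in two lines from the pointwise sub-mean-value bound $\abs{u(y)}^2e^{-2\Phi_1(y)}\le C\int_{\abs{z-y}\le 1}\abs{u(z)}^2e^{-2\Phi_1(z)}\,L(dz)$ together with Fubini, so no reduction to the model space is actually needed there.
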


\medskip
\noindent
{\it Remark}. Condition (\ref{eq1.4}), where $\varphi$ is given by (\ref{eq1.3}), holds precisely when the real quadratic polynomial $\mathbb C^{n_2} \ni x \mapsto \Phi_1(Ax + b) - \Phi_2(x)$ is bounded above on $\mathbb C^{n_2}$. Using Taylor's formula we may write
\begin{equation}
\label{eq1.5}
\Phi_1(Ax + b) = \Phi_1(Ax) + 2{\rm Re}\, \left((\partial_x \Phi_1)(Ax) \cdot b\right) + \Phi_1(b),
\end{equation}
and therefore (\ref{eq1.4}) holds precisely when
\begin{equation}
\label{eq1.6}
\Phi_1(Ax) - \Phi_2(x) \leq 0, \quad x\in \mathbb C^{n_2},
\end{equation}
and
\begin{equation}
\label{eq1.7}
\Phi_1(Ax) - \Phi_2(x) = 0 \Longrightarrow {\rm Re}\, \left((\partial_x \Phi_1)(Ax) \cdot b\right) = 0.
\end{equation}
It is in this form that the boundedness result is stated in~\cite{model}, when $n_1 = n_2$ and $\displaystyle \Phi_1(x) = \Phi_2(x) = \Phi_0(x)$ in (\ref{eq1.0.1}). Condition (\ref{kernel}) holds automatically in the latter case.

\medskip
\noindent
The compactness of the pullback operator (\ref{eq1.2}) as a map: $H_{\Phi_1}(\mathbb{C}^{n_1}) \rightarrow H_{\Phi_2}(\mathbb{C}^{n_2})$
can also be characterized. The following result provides an extension of~\cite[Theorem 1, Theorem 2]{model} and furnishes some additional quantitative information.

\bigskip
\noindent
\begin{theo}
\label{theo 2}
The following conditions are equivalent:
\begin{enumerate}
\item\label{trace} The pullback operator
$$
C_\varphi: H_{\Phi_1}(\mathbb{C}^{n_1}) \rightarrow H_{\Phi_2}(\mathbb{C}^{n_2})
$$
is of trace class, with the singular values $s_j(C_{\varphi})$ satisfying
\begin{equation}
\label{eq1.7.1}
s_j(C_{\varphi}) \leq C \exp\left(-\frac{j^{1/n_2}}{C}\right), \quad j =1,2,\ldots ,
\end{equation}
for some $C>0$.
\item\label{compact} The pullback operator
$$
C_\varphi: H_{\Phi_1}(\mathbb{C}^{n_1}) \rightarrow H_{\Phi_2}(\mathbb{C}^{n_2})
$$
is compact.
\item \label{neg def} There exist a complex $n_1\times n_2$ matrix $A$ and $b\in\mathbb{C}^{n_1}$ such that
$$
\varphi(x)=Ax+b, \quad x\in \mathbb C^{n_2},
$$
and the quadratic form $\mathbb{C}^{n_2}\ni x\mapsto \Phi_1(Ax)-\Phi_2(x)$ is negative definite,
\begin{equation}
\label{eq1.8}
\Phi_1(Ax)-\Phi_2(x) < 0,\quad 0\neq x \in \mathbb C^{n_2}.
\end{equation}
\end{enumerate}
\end{theo}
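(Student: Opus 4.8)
\smallskip
\noindent
The plan is to prove the cycle $(\ref{neg def})\Rightarrow(\ref{trace})\Rightarrow(\ref{compact})\Rightarrow(\ref{neg def})$; the implication $(\ref{trace})\Rightarrow(\ref{compact})$ is immediate, since trace class operators are compact. For $(\ref{compact})\Rightarrow(\ref{neg def})$, a compact operator is bounded, so Theorem~\ref{theo 1} gives $\varphi(x)=Ax+b$, and by the Remark following it the quadratic form $q(x):=\Phi_1(Ax)-\Phi_2(x)$ satisfies $q\le 0$ on $\mathbb C^{n_2}$ together with \eqref{eq1.7}. If $q$ were not negative definite, then, since $q\le 0$, the set $V=\{q=0\}$ would be a nonzero complex subspace; fixing $0\neq x_0\in V$, one gets $Ax_0\neq 0$ (otherwise $\Phi_2(x_0)=\Phi_1(Ax_0)=0$, violating \eqref{kernel condition}), and inserting $x=tx_0$ into \eqref{eq1.5} and using $q(tx_0)=0$ together with the boundedness above of $\Phi_1\circ\varphi-\Phi_2$ one finds $\Phi_1(\varphi(tx_0))-\Phi_2(tx_0)=\Phi_1(b)$ for every $t\in\mathbb C$. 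I would then test compactness against the normalized reproducing kernels $e_w=K_{\Phi_1}(\cdot,w)/\sqrt{K_{\Phi_1}(w,w)}$, which have unit norm and tend weakly to $0$ as $w\to\infty$: a Laplace-type evaluation of the Gaussian integral $\|C_\varphi e_{\varphi(z)}\|^2=\int|e_{\varphi(z)}(\varphi(x))|^2 e^{-2\Phi_2(x)}\,L(dx)$, localized near $x=z$, yields the uniform lower bound $\|C_\varphi e_{\varphi(z)}\|\gtrsim e^{\Phi_1(\varphi(z))-\Phi_2(z)}$; with $z=tx_0$ and $t\to\infty$ we then have $\varphi(tx_0)\to\infty$ while $\|C_\varphi e_{\varphi(tx_0)}\|\gtrsim e^{\Phi_1(b)}>0$, contradicting compactness.

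The substance lies in $(\ref{neg def})\Rightarrow(\ref{trace})$. Condition \eqref{eq1.8} implies \eqref{eq1.6} and makes \eqref{eq1.7} and \eqref{kernel condition} automatic, so Theorem~\ref{theo 1} already gives that $C_\varphi$ is bounded; it remains to prove the singular value bound \eqref{eq1.7.1}, which in particular forces $\sum_j s_j(C_\varphi)<\infty$. Write $r=\rank A\le n_2$. The first step is to normalize the weights: conjugating by the canonical unitaries $H_{\Phi_j}\to H_{\Phi_0}$ (multiplication by the exponential of a holomorphic quadratic form, followed by a linear change of variables) turns $C_\varphi$, up to unitary equivalence and a positive multiplicative constant, into a weighted composition operator $M_{e^{Q}}C_\psi:H_{\Phi_0}(\mathbb C^{n_1})\to H_{\Phi_0}(\mathbb C^{n_2})$, with $\psi$ affine and $Q$ a holomorphic quadratic polynomial --- that is, a metaplectic Fourier integral operator in the complex domain, whose underlying complex linear canonical relation is, by \eqref{eq1.8}, a strict contraction of rank $r$. (One may also first factor out $\ker A$: choosing coordinates with $\ker A=\{0\}\times\mathbb C^{n_2-r}$ and integrating the Gaussian weight $e^{-2\Phi_2}$ over the last $n_2-r$ variables writes $C_\varphi=C_\pi C_{\widetilde\varphi}$, with $C_\pi$ a positive constant times an isometry onto its range, $\widetilde\varphi$ affine and injective, and $\Phi'$ a strictly plurisubharmonic quadratic form on $\mathbb C^r$ for which \eqref{eq1.8} persists.)

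In the cleanest situation --- $n_1=n_2$, $A$ invertible, $b=0$, $\Phi_1=\Phi_2=\Phi_0$ --- one has literally $C_\varphi=C_A$, and writing a singular value decomposition $A=U\Sigma V^{*}$ with $U,V$ unitary and $\Sigma=\mathrm{diag}(\sigma_1,\dots,\sigma_{n_1})$, $0<\sigma_i<1$, the factorization $C_A=C_{V^{*}}C_\Sigma C_U$ shows that $C_\varphi$ is unitarily equivalent to $C_\Sigma$, which is diagonal in the monomial basis with eigenvalues $\{\sigma^\alpha:\alpha\in\mathbb N^{n_1}\}$; the singular values of $C_\varphi$ are then the decreasing rearrangement of these numbers. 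In general, the normal form theory for metaplectic FIOs with contractive canonical relation --- a complex-analytic analogue of the Mehler formula --- shows that $C_\varphi^{*}C_\varphi$ is unitarily equivalent, up to a positive constant $c^2$, to $\bigotimes_{i=1}^{r} e^{-t_i N_i}$ acting on $H_{\Phi_0}(\mathbb C^{r})$ (and vanishing on an orthogonal complement when $r<n_1$), with $N_i$ the Fock number operators and $t_i>0$, the strict positivity of each $t_i$ being exactly what \eqref{eq1.8} furnishes; hence the nonzero eigenvalues of $C_\varphi^{*}C_\varphi$ are the numbers $c^2\prod_{i=1}^{r} e^{-2t_i\alpha_i}$, $\alpha\in\mathbb N^{r}$.

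A lattice point count then closes the argument: with $\mu_0=\min_i t_i>0$, the number of $\alpha\in\mathbb N^{r}$ with $c^2\prod_i e^{-2t_i\alpha_i}\ge\tau$ is at most $\#\{\alpha\in\mathbb N^{r}:|\alpha|\le\mu_0^{-1}\log(c^2/\tau)\}\le C_r(1+\log(1/\tau))^{r}$; applying this with $\tau=s_j(C_\varphi)^2$ gives $j\le C_r(1+2\log(1/s_j(C_\varphi)))^{r}$, and hence $s_j(C_\varphi)\le C\exp(-j^{1/r}/C)\le C\exp(-j^{1/n_2}/C)$ because $r\le n_2$, which is \eqref{eq1.7.1}; summability of this sequence yields the trace class property and completes the cycle. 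The step I expect to be the main obstacle is the passage to the normal form $\bigotimes_i e^{-t_i N_i}$: once $\varphi$ carries a genuine affine part and the weights are non-radial, $M_{e^{Q}}C_\psi$ is a composite of individually unbounded operators (both multiplication by the exponential of a quadratic and translation are unbounded on Fock space), so its diagonalization cannot be obtained by a naive conjugation and must instead be drawn from the metaplectic FIO calculus in the complex domain, together with a Williamson-type diagonalization of the positive quadratic form attached to the operator, which is what produces the rates $t_i$.
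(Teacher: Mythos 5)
Your cycle $(\ref{neg def})\Rightarrow(\ref{trace})\Rightarrow(\ref{compact})\Rightarrow(\ref{neg def})$ matches the paper's. For $(\ref{compact})\Rightarrow(\ref{neg def})$ your argument is workable but takes a detour: the paper simply lets the compact adjoint act on the coherent states $k_{2,w}$ of the \emph{target} space, for which one has the exact identity $\norm{C_\varphi^* k_{2,w}} = (b_{\Phi_1}/b_{\Phi_2})\,e^{\Phi_1(\varphi(w))-\Phi_2(w)}$ and $k_{2,w}\rightharpoonup 0$, so that $e^{\Phi_1(\varphi(w))-\Phi_2(w)}\to 0$ at infinity follows at once and negative definiteness drops out of \eqref{eq1.5}. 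Your variant, testing $C_\varphi$ on source-space coherent states at the points $\varphi(tx_0)$, does work, but the ``uniform lower bound'' $\norm{C_\varphi e_{\varphi(z)}}\gtrsim e^{\Phi_1(\varphi(z))-\Phi_2(z)}$ is an extra estimate you would have to justify (restrict the integral to the half-ball of $B(z,1)$ on which the linear term of $\Phi_1\circ\varphi-\Phi_2$ is nonnegative), and the zero set of the negative semidefinite form $q$ is a \emph{real}, not complex, subspace --- immaterial, since you only need one real ray.

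The genuine gap is in $(\ref{neg def})\Rightarrow(\ref{trace})$, and it is exactly the step you flag as ``the main obstacle'': the assertion that $C_\varphi^*C_\varphi$ is unitarily equivalent, up to a constant, to $\bigotimes_i e^{-t_iN_i}$ is the entire content of that implication, and nothing in your sketch establishes it. In the radial, invertible, $b=0$ case this is the singular value decomposition of \cite{model}, but for non-radial $\Phi_1,\Phi_2$, noninjective $A$, and $b\neq 0$ it requires a Williamson-type diagonalization of a positive metaplectic FIO in the complex domain (the territory of \cite{AlVi18}, \cite{Vi22}), which is substantially harder than the theorem being proved; as written, the proposal does not prove $(\ref{neg def})\Rightarrow(\ref{trace})$. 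The paper avoids the normal form entirely: since \eqref{eq1.8} is an open condition, $\Phi_1(Ax)-\Phi_3(x)<0$ for $0\neq x$ with $\Phi_3=\Phi_2-\delta\abs{x}^2$ and $\delta>0$ small, so Theorem \ref{theo 1} gives boundedness of $C_\varphi:H_{\Phi_1}(\mathbb C^{n_1})\to H_{\Phi_3}(\mathbb C^{n_2})$; writing $C_\varphi=\iota\circ C_\varphi$ with $\iota:H_{\Phi_3}(\mathbb C^{n_2})\hookrightarrow H_{\Phi_2}(\mathbb C^{n_2})$ the inclusion, the known embedding result (Proposition \ref{sv prop}, from \cite{CoHiSj19}, \cite{AlVi18}) gives $s_j(\iota)\leq C\exp(-j^{1/n_2}/C)$, whence $s_j(C_\varphi)\leq \norm{C_\varphi}_{\mathcal L(H_{\Phi_1},H_{\Phi_3})}\,s_j(\iota)$ and \eqref{eq1.7.1} follows. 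Your lattice-point count at the end is correct and would even yield the sharper exponent $j^{1/r}$ with $r=\operatorname{rank}A$, but only if the normal form were actually established; absent that, you should adopt the factorization through $H_{\Phi_3}$.
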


\medskip
\noindent
{\it Remark}. In the case of the radial weight $\Phi_0$ in (\ref{eq1.0.1}), it was remarked in \cite{JiPrZh}, \cite{Que17} that the pullback operator $C_{\varphi}: H_{\Phi_0}(\mathbb C^n) \rightarrow H_{\Phi_0}(\mathbb C^n)$ belongs to all Schatten-von Neumann classes $C_p(H_{\Phi_0}(\mathbb C^n))$, $p > 0$, as soon as it is compact, and the exponential decay of the singular values of $C_{\varphi}$ in this case has been established in~\cite{Que17} in dimension $n = 1$. To the best of our knowledge, the sub-exponential decay of the singular values (\ref{eq1.7.1}) in higher dimensions has not been shown previously, even for the radial weight.

\medskip
\noindent
{\it Remark}. The work \cite{Le17} characterized the boundedness and compactness of the pullback operator $C_\varphi:H_{\Phi_1}(\mathbb{C}^{n_1}) \rightarrow H_{\Phi_2}(\mathbb{C}^{n_2})$ for radial weights $\Phi_1(x) = \frac{1}{2}|x|^2$, $x\in\mathbb{C}^{n_1}$ and $\Phi_2(x) = \frac{1}{2}|x|^2$, $x\in\mathbb{C}^{n_2}$, essentially extending the results of \cite{model} to the case when $n_1,n_2$ are not necessarily equal. Hence, our characterization results can be seen as further extensions of those in \cite{Le17}.

\medskip
\noindent
{\it Remark}. Quite general plurisubharmonic weights on $\mathbb C^n$ are considered in the work \cite{ArTo}, and the boundedness and compactness of (weighted) composition operators on the corresponding exponentially weighted spaces of holomorphic functions are characterized in~\cite{ArTo},  in terms of certain integral transforms associated to the weight. Restricting the attention in Theorem \ref{theo 1} and Theorem \ref{theo 2} to strictly plurisubharmonic weights that are quadratic, we are able to obtain characterization results that are quite explicit.

\medskip
\noindent
{\it Example}. We shall briefly interpret our results in the case when $n_1=n_2=1$. For $j=1,2$, let $\Phi_j$ be a strictly plurisubharmonic quadratic form on $\mathbb{C}$, meaning there are constants $r_j>0$, $s_j\in\mathbb{C}$ such that
\begin{equation}
\label{1d spshqf}
    \Phi_j(x)=r_j\abs{x}^2 + {\rm Re}\,(s_jx^2), \quad x\in\mathbb{C}.
\end{equation}
For $\varphi:\mathbb{C}\rightarrow\mathbb{C}$ holomorphic, Theorem \ref{theo 1} implies that $C_\varphi:H_{\Phi_1}(\mathbb{C})\rightarrow H_{\Phi_2}(\mathbb{C})$ is bounded if and only if the following conditions are satisfied:
\begin{enumerate}
\item There exist complex numbers $a,b\in\mathbb{C}$ such that
\begin{equation}
\label{1d affine}
\varphi(x)=ax+b, \quad x\in \mathbb C.
\end{equation}
\item If $\abs{s_2}/r_2 \geq 1$ (i.e. $\Phi_2$ is not positive definite), then $a\neq 0$.
\item \label{1d bounded}
We have 
\begin{equation}
\label{1d bound}
    \abs{s_1a^2-s_2} \leq r_2 - r_1\abs{a^2},
\end{equation}
and $b$ belongs to the real subspace of $\mathbb{C}$ determined by
\begin{equation}
    \quad\quad\quad(s_1a^2-s_2) x^2 = r_2 - r_1\abs{a^2} \Longrightarrow {\rm Re}\, ((r_1\overline{ax}+s_1ax)b) = 0,\quad x\in S^1 \subset \mathbb{C}.
\end{equation}
\end{enumerate}
Here we have used the equivalence of (\ref{eq1.4}) with (\ref{eq1.6}), (\ref{eq1.7}) to simplify condition (\ref{1d bounded}). Theorem \ref{theo 2} implies that $C_\varphi:H_{\Phi_1}(\mathbb{C})\rightarrow H_{\Phi_2}(\mathbb{C})$ is compact if and only if $\varphi$ is of the form (\ref{1d affine}), and (\ref{1d bound}) holds with strict inequality. We observe in particular that bounded composition operators between fixed Bargmann spaces need not exist if the strictly plurisubharmonic weight associated with the target Bargmann space is "too far from" positive definite. Indeed, using our one dimensional characterizations above, one can show that for $\Phi_1$, $\Phi_2$ as in (\ref{1d spshqf}), there exists a bounded pullback operator from $H_{\Phi_1}(\mathbb{C})$ to $H_{\Phi_2}(\mathbb{C})$ if and only if $\abs{s_2}/r_2 < 1$ or $\abs{s_2}/r_2 \leq \abs{s_1}/r_1$, while there exists a compact pullback operator from $H_{\Phi_1}(\mathbb{C})$ to $H_{\Phi_2}(\mathbb{C})$ if and only if $\abs{s_2}/r_2 < 1$ or $\abs{s_2}/r_2 < \abs{s_1}/r_1$.

\medskip
\noindent
The plan of the note is as follows. The necessity part of Theorem \ref{theo 1} is proved in Section \ref{sec_nec}, proceeding essentially as in~\cite{model} and letting the (bounded) pullback operator, or rather its adjoint, act on the space of normalized coherent states for the Bargmann space. See also~\cite{chs}. The sufficiency part of Theorem \ref{theo 1} is then established in Section \ref{suff_cond}. Compared with ~\cite{model}, where a major idea in the proof of sufficiency is to use the singular value decomposition and to exploit the orthogonality of monomials in the radial Bargmann space $H_{\Phi_0}(\mathbb C^n)$, we proceed differently, viewing $C_{\varphi}$ as a complex Fourier integral operator and obtaining its boundedness by an application of Schur's lemma. Section \ref{sect_compact} is devoted to the proof of Theorem \ref{theo 2}. Our notation is standard and we follow the convention that $C$ denotes a positive constant whose value may change from line to line.

\medskip
\noindent
{\bf Acknowledgments}.
The author would like to thank Michael Hitrik for recommending this project, and for many essential suggestions. The author would also like to thank the anonymous referee for their careful reading and helpful comments.

\section{Necessary conditions for boundedness of pullback operators}
\label{sec_nec}
\medskip
\noindent
The purpose of this section is to prove the forward direction of Theorem~\ref{theo 1}. It will be helpful to decompose
\begin{equation}
\label{eq2.1}
\Phi_1 = \Phi_{{\rm herm}} + \Phi_{{\rm plh}},
\end{equation}
where $\Phi_{{\rm herm}}(x) = \left(\Phi_1\right)''_{\overline{x}x}x\cdot \overline{x}$ is Hermitian positive definite and $\Phi_{{\rm plh}}(x) = {\rm Re}\, f(x)$, with $f(x) = \left(\Phi_1\right)''_{xx}x\cdot x$, is pluriharmonic on $\mathbb C^{n_1}$.

\medskip
\noindent
We need to establish that three conclusions (\ref{affine}), (\ref{kernel}), (\ref{bounded}) in Theorem \ref{theo 1} follow from the boundedness of the operator
$$
C_\varphi: H_{\Phi_1}(\mathbb C^{n_1}) \rightarrow H_{\Phi_2}(\mathbb C^{n_2}),
$$
which we accomplish (out of order) in the following three lemmas:
\begin{lemm}
Let $\Phi_1$, $\Phi_2$ be strictly plurisubharmonic quadratic forms on $\mathbb{C}^{n_1}$, $\mathbb{C}^{n_2}$, respectively. If the pullback operator $C_\varphi: H_{\Phi_1}(\mathbb{C}^{n_1}) \rightarrow H_{\Phi_2}(\mathbb{C}^{n_2})$ is bounded for some holomorphic $\varphi:\mathbb C^{n_2} \rightarrow \mathbb C^{n_1}$, then we have
$$
e^{\Phi_1\circ\varphi-\Phi_2} \in L^{\infty}(\mathbb{C}^{n_2}).
$$
\end{lemm}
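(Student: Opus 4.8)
The plan is to exploit the reproducing kernel Hilbert space structure of $H_{\Phi_1}(\mathbb{C}^{n_1})$ and $H_{\Phi_2}(\mathbb{C}^{n_2})$, testing the adjoint $C_\varphi^*: H_{\Phi_2}(\mathbb{C}^{n_2}) \to H_{\Phi_1}(\mathbb{C}^{n_1})$ on reproducing kernels, essentially as in~\cite{model},~\cite{chs}. For a strictly plurisubharmonic quadratic form $\Phi$ on $\mathbb{C}^n$, let $\Psi(z,\overline{w})$ be the polarization of $\Phi$, i.e. the quadratic form on $\mathbb{C}^n \times \mathbb{C}^n$ that is holomorphic in $z$, antiholomorphic in $w$, Hermitian symmetric, and agrees with $\Phi$ on the antidiagonal $w = z$; then $e^{2\Psi(\cdot,\,\overline{w})} \in H_{\Phi}(\mathbb{C}^n)$ for every $w \in \mathbb{C}^n$, and it is classical, see e.g.~\cite{Sj95}, that the reproducing kernel of $H_{\Phi}(\mathbb{C}^n)$ is $K_w = c_{\Phi}^{-1}\, e^{2\Psi(\cdot,\,\overline{w})}$ for a constant $c_{\Phi} > 0$. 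Using a decomposition $\Phi = \Phi_{{\rm herm}} + \Phi_{{\rm plh}}$ of the type (\ref{eq2.1}) — where strict plurisubharmonicity makes $\Phi_{{\rm herm}}$ Hermitian positive definite — a direct computation yields $\bigl|e^{2\Psi(z,\overline{w})}\bigr|^2 e^{-2\Phi(z)} = e^{2\Phi(w) - 2\Phi_{{\rm herm}}(z - w)}$, from which $\|e^{2\Psi(\cdot,\,\overline{w})}\|_{H_{\Phi}}^2 = c_{\Phi}\, e^{2\Phi(w)}$ with $c_{\Phi} = \int_{\mathbb{C}^n} e^{-2\Phi_{{\rm herm}}(\zeta)}\, L(d\zeta) < \infty$, and hence $\|K_w\|_{H_{\Phi}}^2 = K_w(w) = c_{\Phi}^{-1} e^{2\Phi(w)}$.

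Applying this to $\Phi = \Phi_1$ and $\Phi = \Phi_2$ gives reproducing kernels $K^{(1)}_w$, $K^{(2)}_w$ with $\|K^{(j)}_w\|_{H_{\Phi_j}}^2 = c_j^{-1} e^{2\Phi_j(w)}$, $c_j > 0$. Next I would identify the action of the adjoint on kernels: since $C_\varphi$ is assumed bounded, $C_\varphi^*$ is bounded with $\|C_\varphi^*\| = \|C_\varphi\|$, and for every $u \in H_{\Phi_1}(\mathbb{C}^{n_1})$ and $w \in \mathbb{C}^{n_2}$ the reproducing property together with $C_\varphi u = u\circ\varphi$ gives
\[
\langle u, C_\varphi^* K^{(2)}_w\rangle_{H_{\Phi_1}} = \langle C_\varphi u, K^{(2)}_w\rangle_{H_{\Phi_2}} = (u\circ\varphi)(w) = u(\varphi(w)) = \langle u, K^{(1)}_{\varphi(w)}\rangle_{H_{\Phi_1}},
\]
so that $C_\varphi^* K^{(2)}_w = K^{(1)}_{\varphi(w)}$ for all $w \in \mathbb{C}^{n_2}$ (note $\varphi$ need not be affine here, only holomorphic).

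Combining the two steps, for every $w \in \mathbb{C}^{n_2}$,
\[
c_1^{-1} e^{2\Phi_1(\varphi(w))} = \|K^{(1)}_{\varphi(w)}\|_{H_{\Phi_1}}^2 = \|C_\varphi^* K^{(2)}_w\|_{H_{\Phi_1}}^2 \leq \|C_\varphi\|^2\, \|K^{(2)}_w\|_{H_{\Phi_2}}^2 = \|C_\varphi\|^2\, c_2^{-1} e^{2\Phi_2(w)},
\]
whence $e^{2(\Phi_1\circ\varphi - \Phi_2)(w)} \leq (c_1/c_2)\,\|C_\varphi\|^2$ uniformly in $w \in \mathbb{C}^{n_2}$, which is exactly $e^{\Phi_1\circ\varphi - \Phi_2} \in L^{\infty}(\mathbb{C}^{n_2})$. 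The only non-formal point is the first paragraph — pinning down the reproducing kernel of $H_{\Phi_j}(\mathbb{C}^{n_j})$ and the norm of the associated coherent state — and this is a standard Gaussian computation once (\ref{eq2.1}) is in hand, with strict plurisubharmonicity precisely what guarantees $c_j < \infty$. (Alternatively, one could avoid adjoints and argue by contradiction: if $\Phi_1(\varphi(w_k)) - \Phi_2(w_k) \to +\infty$ along some sequence $w_k$, the normalized coherent states at $\varphi(w_k)$ in $H_{\Phi_1}(\mathbb{C}^{n_1})$ witness the unboundedness of $C_\varphi$ via the pointwise bound $|v(w_k)| \leq c_2^{-1/2} e^{\Phi_2(w_k)}\|v\|_{H_{\Phi_2}}$.)
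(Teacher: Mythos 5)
Your proposal is correct and follows essentially the same route as the paper: both test the adjoint $C_\varphi^*$ on the reproducing kernels of $H_{\Phi_2}(\mathbb{C}^{n_2})$, identify $C_\varphi^*K^{(2)}_w$ with the kernel of $H_{\Phi_1}(\mathbb{C}^{n_1})$ at $\varphi(w)$, and read off the uniform bound on $e^{\Phi_1(\varphi(w))-\Phi_2(w)}$ from the norm inequality $\norm{C_\varphi^*K^{(2)}_w}\leq\norm{C_\varphi}\,\norm{K^{(2)}_w}$ together with $\norm{K_w}^2=K_w(w)$. The only cosmetic difference is that you work with unnormalized kernels and derive the adjoint identity directly from the reproducing property, whereas the paper uses normalized coherent states and the projection formula; the Gaussian computation of $\norm{e^{2\Psi(\cdot,\overline{w})}}^2$ via the decomposition (\ref{eq2.1}) is exactly the paper's estimate (\ref{eq3.4}).
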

\begin{proof}

Following~\cite{model}, we shall consider the action of the adjoint $$C_{\varphi}^*:H_{\Phi_2}(\mathbb C^{n_2})\rightarrow H_{\Phi_1}(\mathbb C^{n_1})$$ on the space of "coherent states", i.e. the normalized reproducing kernels for the Bargmann space $H_{\Phi_2}(\mathbb C^{n_2})$. For $j=1,2$, let us set
\begin{equation}
\label{eq2.6}
k_{j,w}(x) = b_{\Phi_j}\, e^{2\Psi_j(x,\overline{w})-\Phi_j(w)},\quad w\in \mathbb C^{n_j}.
\end{equation}
Here $\Psi_j$ is the polarization of $\Phi_j$, i.e. the unique holomorphic quadratic form on $\mathbb C^{2n_j}$ such that $\Psi_j(x,\overline{x}) = \Phi_j(x)$, $x\in \mathbb C^{n_j}$, and $b_{\Phi_j} > 0$ is a suitable constant depending on $\Phi_j$ only, such that $\norm{k_{j,w}}_{H_{\Phi_j}(\mathbb C^{n_j})} = 1$, $w\in \mathbb C^{n_j}$. Let us recall from~\cite[Section 1]{Sj95},~\cite[Chapter 13]{Zw12} that the orthogonal projection
\begin{equation}
\label{eq2.2}
\Pi_{\Phi_j}: L^2(\mathbb C^{n_j}, e^{-2\Phi_j}L(dx)) \rightarrow H_{\Phi_j}(\mathbb C^{n_j}), \quad j =1,2
\end{equation}
is given by
\begin{equation}
\label{eq2.3}
\Pi_{\Phi_j} u(w) = b_{\Phi_j} e^{\Phi_j(w)} \langle u, k_{j,w}\rangle_{L^2(\mathbb{C}^{n_j},\, e^{-2\Phi_j} L(dx))} = b_{\Phi_j}^2 \int_{\mathbb{C}^{n_j}} e^{2\Psi_j(w,\overline{x})} u(x)\, e^{-2\Phi_j(x)}\, L(dx).
\end{equation}
Here we have also used that $\overline{\Psi_j(x,\overline{w})} = \Psi_j(w,\overline{x})$, $x,w\in \mathbb C^{n_j}$. By using (\ref{eq2.3}) to rewrite both sides of the trivial equation
\begin{equation}
    (\Pi_{\Phi_2}C_\varphi u)(w)=(C_\varphi u)(w)=u(\varphi(w))=(\Pi_{\Phi_1}u)(\varphi(w)), \quad u\in H_{\Phi_1}(\mathbb{C}^{n_1}),
\end{equation}
it is easy to see that 
\begin{equation}
    C_\varphi^*k_{2,w}=\frac{b_{\Phi_1}}{b_{\Phi_2}}e^{\Phi_1(\varphi(w))-\Phi_2(w)}k_{1,\varphi(w)},\quad w\in\mathbb{C}^{n_2},
\end{equation}
and the boundedness of $C_{\varphi}: H_{\Phi_1}(\mathbb C^{n_1}) \rightarrow H_{\Phi_2}(\mathbb C^{n_2})$ implies therefore that the norms
\begin{equation}
\label{eq2.10}
\norm{C_{\varphi}^* k_{2,w}}_{H_{\Phi_1}(\mathbb C^{n_1})} = \frac{b_{\Phi_1}}{b_{\Phi_2}}\, e^{\Phi_1(\varphi(w)) - \Phi_2(w)}, \quad w\in \mathbb C^{n_2},
\end{equation}
are bounded uniformly in $w\in \mathbb C^{n_2}$, completing the proof.
\end{proof}

\bigskip
\noindent
\begin{lemm}
\label{bounded implies affine}
Let $\Phi_1$, $\Phi_2$ be real quadratic forms on $\mathbb{C}^{n_1}$, $\mathbb{C}^{n_2}$, respectively, with $\Phi_1$ strictly plurisubharmonic. If \begin{equation}
\label{eq2.10.1}
e^{\Phi_1\circ\varphi-\Phi_2} \in L^{\infty}(\mathbb C^{n_2})
\end{equation}
for some holomorphic $\varphi:\mathbb C^{n_2} \rightarrow \mathbb C^{n_1}$, then we must have $\varphi(x)=Ax+b$, $x\in \mathbb C^{n_2}$, for some complex $n_1\times n_2$ matrix $A$ and some $b\in\mathbb C^{n_1}$.
\end{lemm}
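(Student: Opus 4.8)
The plan is to deduce from the one-sided bound encoded in (\ref{eq2.10.1}) that each component of $\varphi$ is an entire function on $\mathbb C^{n_2}$ of at most linear growth, which forces it to be affine. First, since $\Phi_1\circ\varphi-\Phi_2$ is continuous, the hypothesis $e^{\Phi_1\circ\varphi-\Phi_2}\in L^\infty(\mathbb C^{n_2})$ is equivalent to $\Phi_1(\varphi(w))-\Phi_2(w)\le C_0$ for all $w\in\mathbb C^{n_2}$, and since $\Phi_2$ is a real quadratic form on $\mathbb C^{n_2}$ this yields
\[
\Phi_1(\varphi(w))\le C(1+\abs{w}^2),\quad w\in\mathbb C^{n_2},
\]
for suitable constants $C_0,C>0$.

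Next I would invoke the decomposition $\Phi_1=\Phi_{{\rm herm}}+\Phi_{{\rm plh}}$ of (\ref{eq2.1}): here $\Phi_{{\rm herm}}$ is Hermitian positive definite, so $\Phi_{{\rm herm}}(x)\ge c\abs{x}^2$ for some $c>0$, and $\Phi_{{\rm plh}}(x)={\rm Re}\,f(x)$ with $f$ a holomorphic quadratic form on $\mathbb C^{n_1}$, so $\abs{f(x)}\le C\abs{x}^2$. Set $F:=f\circ\varphi$, holomorphic on $\mathbb C^{n_2}$. Using $\Phi_{{\rm herm}}\circ\varphi\ge0$ we obtain
\[
{\rm Re}\,F(w)=\Phi_1(\varphi(w))-\Phi_{{\rm herm}}(\varphi(w))\le\Phi_1(\varphi(w))\le C(1+\abs{w}^2),\quad w\in\mathbb C^{n_2}.
\]

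The crux is to upgrade this one-sided bound on ${\rm Re}\,F$ to a genuine bound on $\abs{F}$, and this is the only step requiring any real argument; everything else is bookkeeping. For fixed $w_0\in\mathbb C^{n_2}$, restrict $F$ to the complex line $t\mapsto F(tw_0)$, $t\in\mathbb C$, and apply the Borel--Carath\'eodory inequality on the disc $\abs{t}\le 2$; since ${\rm Re}\,F(tw_0)\le C(1+4\abs{w_0}^2)$ there, evaluating at $t=1$ gives
\[
\abs{F(w_0)}\le 2\sup_{\abs{t}\le2}{\rm Re}\,F(tw_0)+3\abs{F(0)}\le C'(1+\abs{w_0}^2),
\]
uniformly in $w_0\in\mathbb C^{n_2}$ (the case $w_0=0$ being trivial). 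One may alternatively cite the Liouville--Phragm\'en--Lindel\"of-type statement that a pluriharmonic function on $\mathbb C^{n_2}$ bounded above by $C(1+\abs{w}^2)$ is a real polynomial of degree at most two.

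Finally I would feed this back in: from $\abs{F(w)}=\abs{f(\varphi(w))}\le C(1+\abs{w}^2)$ we get
\[
c\abs{\varphi(w)}^2\le\Phi_{{\rm herm}}(\varphi(w))=\Phi_1(\varphi(w))-{\rm Re}\,F(w)\le C(1+\abs{w}^2)+\abs{F(w)}\le C(1+\abs{w}^2),
\]
hence $\abs{\varphi(w)}\le C(1+\abs{w})$ on $\mathbb C^{n_2}$. Each component $\varphi_j$ is then entire on $\mathbb C^{n_2}$ with at most linear growth, so Cauchy's estimates on balls of radius $R\to\infty$ annihilate all of its Taylor coefficients of order $\ge 2$; thus every $\varphi_j$ is affine, i.e. $\varphi(x)=Ax+b$ for some complex $n_1\times n_2$ matrix $A$ and some $b\in\mathbb C^{n_1}$, as claimed.
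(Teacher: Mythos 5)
Your proof is correct, and it diverges from the paper's at the one genuinely delicate step. Both arguments start from the same upper bound $\Phi_1(\varphi(w))-\Phi_2(w)\le C$ and the same decomposition $\Phi_1=\Phi_{\rm herm}+\Phi_{\rm plh}$ with $\Phi_{\rm plh}={\rm Re}\,f$, and both end by noting that $\abs{\varphi(w)}\le C(1+\abs{w})$ forces $\varphi$ to be affine via Cauchy estimates/Liouville. The difference is how the uncontrolled pluriharmonic term ${\rm Re}\,f(\varphi(w))$ is neutralized. The paper averages the inequality over balls $B(0,R)$ and uses the mean value property, under which $\int_{B(0,R)}f(\varphi(x))\,L(dx)$ reduces to the constant $f(\varphi(0))\abs{B(0,R)}$; this yields an $L^2$-average bound on $\abs{\varphi}^2$, which is then upgraded to a pointwise bound by another application of the sub-mean-value property together with Cauchy--Schwarz. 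You instead restrict $F=f\circ\varphi$ to complex lines through the origin and apply Borel--Carath\'eodory to convert the one-sided bound ${\rm Re}\,F(w)\le C(1+\abs{w}^2)$ (which follows from $\Phi_{\rm herm}\ge 0$) into the two-sided bound $\abs{F(w)}\le C(1+\abs{w}^2)$, after which $\Phi_{\rm herm}(\varphi(w))=\Phi_1(\varphi(w))-{\rm Re}\,F(w)$ is controlled pointwise directly. Your route is more pointwise and arguably more transparent about where the one-sided-to-two-sided upgrade happens (it is exactly Borel--Carath\'eodory, or equivalently the Liouville-type statement for pluriharmonic functions bounded above by a quadratic); the paper's route stays entirely within mean-value/averaging arguments and never needs to bound $\abs{f\circ\varphi}$ itself. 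Both are complete; the only point worth making explicit in yours is that the Borel--Carath\'eodory constant computation is legitimate because $\sup_{\abs{t}\le 2}{\rm Re}\,F(tw_0)\ge{\rm Re}\,F(0)$, so the right-hand side of your displayed inequality is indeed dominated by $C'(1+\abs{w_0}^2)$.
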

\begin{proof}
Assumption (\ref{eq2.10.1}) holds precisely when the function $\Phi_1\circ\varphi-\Phi_2$ is bounded above on $\mathbb{C}^{n_2}$. We then get, recalling the decomposition (\ref{eq2.1}),
\begin{equation}
\label{eq2.11}
\frac{1}{C_1} \abs{\varphi(x)}^2 + {\rm Re}\, f(\varphi(x)) \leq C\left(1 + \abs{x}^2\right),\quad x\in \mathbb C^{n_2},
\end{equation}
for some constants $C>0$, $C_1 \geq 1$. Here we have also used that
\begin{equation}
\label{eq2.12}
\Phi_2(x) \leq C \abs{x}^2, \quad \frac{1}{C_1}\abs{y}^2 \leq \Phi_{\rm herm}(y),\quad x\in \mathbb C^{n_2},y\in\mathbb{C}^{n_1}.
\end{equation}
Letting $B(0,R)$ be the open ball of radius $R>0$ in $\mathbb C^{n_2}$ and writing $|\cdot|$ for the Lebesgue measure on $\mathbb C^{n_2}$, we get in view of (\ref{eq2.11}),
\begin{multline}
\label{eq2.13}
\frac{1}{C_1 \abs{B(0,R)}} \int_{B(0,R)} \abs{\varphi(x)}^2\, L(dx) + \frac{1}{\abs{B(0,R)}} \int_{B(0,R)} {\rm Re}\, f(\varphi(x))\, L(dx) \\
\leq C(1 + R^2),\quad R >0.
\end{multline}
See also~\cite[Proposition 2.1]{Le14} for a related idea. The mean value property for the entire holomorphic function $\mathbb C^{n_2} \ni x \mapsto f(\varphi(x))$ gives that
\begin{equation}
\label{eq2.14}
f(\varphi(0)) = \frac{1}{\abs{B(0,R)}} \int_{B(0,R)} f(\varphi(x))\, L(dx)
\end{equation}
and combining (\ref{eq2.13}), (\ref{eq2.14}), we obtain
\begin{equation}
\label{eq2.15}
\frac{1}{{\abs{B(0,R)}}}\int_{B(0,R)}\abs{\varphi(x)}^2\, L(dx) \leq C(1 + R^2),\quad R>0.
\end{equation}
Another application of the mean value property to the holomorphic function $\varphi$ together with the Cauchy-Schwarz inequality and (\ref{eq2.15}) implies that
\begin{multline}
\label{eq2.16}
\abs{\varphi(x)}^2 \leq \frac{1}{\abs{B(x,\abs{x})}}\int_{B(x,\abs{x})}\abs{\varphi(y)}^2\, L(dy) \\
\leq \frac{1}{\abs{B(x,\abs{x})}} \int_{B(0,2\abs{x})}\abs{\varphi(y)}^2\, L(dy) \leq C\left(1 + \abs{x}^2\right), \quad 0 \neq x \in\mathbb C^{n_2},
\end{multline}
and the result follows by Liouville's theorem.
\end{proof}

\medskip
\noindent
\begin{lemm}
Let $\Phi_1$, $\Phi_2$ be strictly plurisubharmonic quadratic forms on $\mathbb{C}^{n_1}$, $\mathbb{C}^{n_2}$, respectively, and let $\varphi(x)=Ax+b$, for some complex $n_1\times n_2$ matrix $A$ and $b\in\mathbb C^{n_1}$. If the operator $C_\varphi: H_{\Phi_1}(\mathbb C^{n_1}) \rightarrow H_{\Phi_2}(\mathbb C^{n_2})$ is bounded, then for any $0 \neq x\in\mathbb C^{n_2}$ with $\Phi_2(x) \leq 0$, we have $Ax \neq 0$.
\end{lemm}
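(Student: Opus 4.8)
The plan is to argue by contradiction. Suppose there is $0 \neq x_0 \in \mathbb{C}^{n_2}$ with $\Phi_2(x_0) \leq 0$ and $Ax_0 = 0$; I will exhibit an element of $H_{\Phi_1}(\mathbb{C}^{n_1})$ whose image under $C_\varphi$ fails to lie in $H_{\Phi_2}(\mathbb{C}^{n_2})$, contradicting the boundedness of $C_\varphi$. The natural test function is the coherent state $k_{1,0}$ from \eqref{eq2.6}. Since $\Phi_1(0)=0$, it has the form $k_{1,0}(x) = b_{\Phi_1}\, e^{g(x)}$, where $g(x) := 2\Psi_1(x,0)$ is a holomorphic quadratic form on $\mathbb{C}^{n_1}$ (the restriction of the polarization $\Psi_1$ to $\{(x,0) : x\in\mathbb{C}^{n_1}\}$), so $k_{1,0}$ is a nonzero, nowhere-vanishing element of $H_{\Phi_1}(\mathbb{C}^{n_1})$. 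Substituting $\varphi(w) = Aw + b$,
\[
(C_\varphi k_{1,0})(w) = b_{\Phi_1}\, e^{g(Aw + b)}, \qquad w \in \mathbb{C}^{n_2},
\]
and hence
\[
\norm{C_\varphi k_{1,0}}_{H_{\Phi_2}(\mathbb{C}^{n_2})}^2 = b_{\Phi_1}^2 \int_{\mathbb{C}^{n_2}} \exp\bigl( 2\,{\rm Re}\, g(Aw + b) - 2\Phi_2(w) \bigr)\, L(dw).
\]

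Next I would invoke the elementary fact that a real polynomial $p$ of degree at most $2$ on a finite-dimensional real vector space with $\int e^{p}\, L(dx) < \infty$ must have negative definite homogeneous quadratic part. (If that part takes a positive value, then $e^{p}$ grows on an open cone and the integral diverges; if it is negative semidefinite with nontrivial null space $N$, then $p$ restricts to an affine function on each translate of $N$, and integrating over $N$ first and then over a complementary subspace yields $+\infty$ by Tonelli.) The exponent $w \mapsto 2\,{\rm Re}\, g(Aw + b) - 2\Phi_2(w)$ is such a polynomial in $w \in \mathbb{C}^{n_2}\cong\mathbb{R}^{2n_2}$, with homogeneous quadratic part the quadratic form $q(w) := 2\,{\rm Re}\, g(Aw) - 2\Phi_2(w)$ (the vector $b$ affects only the lower-order terms, $g$ being a quadratic form). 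Thus, if $C_\varphi$ is bounded, $q$ must be negative definite.

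To finish, evaluate $q$ at $x_0$: since $Ax_0 = 0$ and $g$ is homogeneous with $g(0) = 0$, we get $q(x_0) = -2\Phi_2(x_0) \geq 0$ by the standing assumption $\Phi_2(x_0) \leq 0$, which contradicts the negative definiteness of $q$ because $x_0 \neq 0$. Therefore no such $x_0$ can exist, which is the assertion of the lemma. The only delicate point is the multivariable Gaussian fact above: when $n_2 \geq 2$ one cannot simply test the integral on the complex line $\mathbb{C}x_0$, which is a null set, so the cone/Tonelli thickening is genuinely needed. Conceptually, the mechanism is that $C_\varphi k_{1,0}$ is constant along $\mathbb{C}x_0$ while $e^{-2\Phi_2} \geq 1$ along the real line $\mathbb{R}x_0$ (as $\Phi_2(sx_0) = s^2\Phi_2(x_0) \leq 0$), so no nonzero function that is constant in the $x_0$-direction can be square-integrable against the weight $e^{-2\Phi_2}$.
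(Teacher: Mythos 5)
Your proof is correct and is essentially the paper's argument: your test function $k_{1,0}=b_{\Phi_1}e^{2\Psi_1(\cdot,0)}$ is exactly a constant multiple of the function $e^{f}$ used in the paper (with $f$ the holomorphic quadratic form from the decomposition (\ref{eq2.1})), and both proofs conclude by noting that finiteness of $\norm{C_\varphi(e^f)}_{H_{\Phi_2}}$ forces the homogeneous quadratic part $2\,{\rm Re}\,f(Ax)-2\Phi_2(x)$ of the exponent to be negative definite, which is then evaluated at a putative $x_0$ with $Ax_0=0$. Your explicit justification of the Gaussian integrability criterion (the cone/Tonelli argument for the semidefinite case) fills in a step the paper leaves implicit, and is correct.
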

\begin{proof}
The decomposition (\ref{eq2.1}) shows that $e^f \in H_{\Phi_1}(\mathbb C^{n_1})$ and therefore we have
$C_{\varphi}(e^f) = e^{f\circ \varphi} \in H_{\Phi_2}(\mathbb C^{n_2})$. In other words, the integral
\begin{equation}
\norm{C_\varphi(e^f)}^2_{H_{\Phi_2}(\mathbb{C}^{n_2})} = \int_{\mathbb{C}^{n_2}}\abs{e^{f(\varphi(x))}}^2 e^{-2\Phi_2(x)}\,L(dx)
    \\ = \int_{\mathbb{C}^{n_2}} e^{2({\rm Re}\, f(\varphi(x)))-2\Phi_2(x)}\, L(dx)
\end{equation}
converges, and the principal part of the real quadratic polynomial ${\rm Re}\, f(\varphi(x))-\Phi_2(x)$ on $\mathbb C^{n_2}$, given by the quadratic form ${\rm Re}\, f(Ax)-\Phi_2(x)$, is therefore negative definite,
$$
{\rm Re}\, f(Ax)-\Phi_2(x) < 0, \quad 0\neq x\in \mathbb C^{n_2}.
$$
The result follows.
\end{proof}

\section{Sufficient conditions for boundedness of pullback operators}
\label{suff_cond}
\medskip
\noindent
The purpose of this section is to prove the backward direction of Theorem \ref{theo 1}, so we can assume that conditions $(\ref{affine})$, $(\ref{kernel})$, and $(\ref{bounded})$ in Theorem \ref{theo 1} are all satisfied. Given $u\in H_{\Phi_1}(\mathbb C^{n_1})$, let us write using (\ref{eq2.2}), (\ref{eq2.3}),
\begin{equation}
\label{eq3.1}
(C_{\varphi} u)(x) = u(\varphi(x)) = (\Pi_{\Phi_1} u)(\varphi(x)) = b_{\Phi_1}^2 \int_{\mathbb C^{n_1}} e^{2\Psi_1(\varphi(x),\overline{y})} u(y)\, e^{-2\Phi_1(y)}\, L(dy).
\end{equation}
It follows that
\begin{equation}
\label{eq3.2}
\abs{(C_{\varphi}u)(x)} e^{-\Phi_2(x)} \leq \int_{\mathbb C^{n_1}} K(x,y) \abs{u(y)} e^{-\Phi_1(y)}\, L(dy),
\end{equation}
where
\begin{equation}
\label{eq3.3}
K(x,y) \leq C\, e^{2{\rm Re}\, \Psi_1(\varphi(x),\overline{y})} e^{-\Phi_2(x) - \Phi_1(y)}, \quad x\in\mathbb{C}^{n_2},y\in\mathbb{C}^{n_1}.
\end{equation}
In view of Schur's lemma, in order to show that $C_{\varphi} \in \mathcal L(H_{\Phi_1}(\mathbb C^{n_1}), H_{\Phi_2}(\mathbb C^{n_2}))$, it suffices to verify that
\begin{equation}
\sup_{x\in\mathbb{C}^{n_2}}\int_{\mathbb{C}^{n_1}} K(x,y)\, L(dy)<\infty, \quad \sup_{y\in\mathbb{C}^{n_1}}\int_{\mathbb{C}^{n_2}} K(x,y)\,L(dx)<\infty.
\label{integral bounds}
\end{equation}
To this end, using the fundamental estimate
\begin{equation}
\label{eq3.4}
2{\rm Re}\, \Psi_1(z,\overline{y}) - \Phi_1(z) - \Phi_1(y) = -\Phi_{\rm herm}(z-y) \asymp -\abs{z-y}^2, \quad y,z\in\mathbb{C}^{n_1},
\end{equation}
see~\cite[Chapter 13]{Zw12}, together with (\ref{eq3.3}) we obtain that
\begin{equation}
\label{eq3.5}
K(x,y) \leq C\, e^{-\abs{\varphi(x) -y}^2/C + \Phi_1(\varphi(x)) - \Phi_2(x)}, \quad C >0.
\end{equation}
It follows from (\ref{eq3.5}) that to establish \eqref{integral bounds}, we need to show that
\begin{equation}
\sup_{x\in\mathbb{C}^{n_2}}\int_{\mathbb{C}^{n_1}} e^{-\abs{\varphi(x)-y}^2/C +\Phi_1(\varphi(x))-\Phi_2(x)}\, L(dy)<\infty,
\label{y integral}
\end{equation}
\begin{equation}
\sup_{y\in\mathbb{C}^{n_1}}\int_{\mathbb{C}^{n_2}} e^{-\abs{\varphi(x)-y}^2/C +\Phi_1(\varphi(x))-\Phi_2(x)}\, L(dx)<\infty.
\label{x integral}
\end{equation}
Here (\ref{y integral}) is easily seen using (\ref{eq1.4}):
\begin{multline}
\label{eq3.6}
\int_{\mathbb{C}^{n_1}} e^{-\abs{\varphi(x)-y}^2/C +\Phi_1(\varphi(x))-\Phi_2(x)}\, L(dy) \leq C\,\int_{\mathbb{C}^{n_1}} e^{-\abs{\varphi(x)-y}^2/C}\, L(dy) \\
\leq C\, \int_{\mathbb{C}^{n_1}} e^{-\abs{y}^2/C}\, L(dy),
\end{multline}
and thus, (\ref{y integral}) holds. It only remains for us to prove \eqref{x integral}, which is the more interesting estimate, depending also on condition (\ref{kernel}) in Theorem \ref{theo 1}.

\medskip
\noindent
When showing (\ref{x integral}), we may ignore the complex structure, identifying $\mathbb{C}^{n_j} \simeq \mathbb{R}^{m_j}$, $m_j = 2n_j$, meaning that we now regard $\Phi_j$ as a real quadratic form on $\mathbb{R}^{m_j}$, $j=1,2$. We shall also view $\varphi$ as a real affine map from $\mathbb R^{m_2}\rightarrow\mathbb{R}^{m_1}$, regarding $A$ as a real $m_1\times m_2$ matrix and $b$ as an element of $\mathbb{R}^{m_1}$. We can introduce real linear coordinates on $\mathbb R^{m_2}$, still denoted by $x\in \mathbb R^{m_2}$, with the decomposition
\begin{equation}
\label{eq3.7}
x = (x_1,\ldots, x_{m_2}) = (x',x'')\in \mathbb R^{m_2-d} \times \mathbb R^{d},
\end{equation}
such that the null space ${\rm Ker}\, A \subset \mathbb R^{m_2}$ is given by the $d$ real linear equations
\begin{equation}
\label{eq3.8}
{\rm Ker}\, A=\{(x',x'')\in \mathbb R^{m_2};\,\, x''=0\}.
\end{equation}
Here $0 \leq d \leq m_2$. We need to show that
\begin{equation}
\label{eq3.9}
\sup_{y\in\mathbb{R}^{m_1}} \int_{\mathbb{R}^{d}} \int_{\mathbb{R}^{m_2-d}} e^{-\abs{\varphi(x',x'')-y}^2/C +\Phi_1(\varphi(x',x''))-\Phi_2(x',x'')}
\,dx'\, dx'' <\infty.
\end{equation}
Here we know in view of (\ref{eq3.8}) and (\ref{kernel condition}) that the quadratic form $\Phi_2(x',0)$ satisfies
\begin{equation}
\label{eq3.10}
\Phi_2(x',0) \asymp \abs{x'}^2, \quad x'\in \mathbb R^{m_2-d}.
\end{equation}

\medskip
\noindent
In order to proceed, we require the following essentially well known lemma, see~\cite[Proposition 2.4.3]{HiSj15}.
\begin{lemm}
\label{lemm_Taylor}
Consider a decomposition $\mathbb R^{m_2} \ni x = (x',x'') \in \mathbb R^{m_2-d} \times \mathbb R^d$, and let $q$ be a real quadratic form on $\mathbb R^{m_2}$ such that $x'\mapsto q(x',0)$ is a non-degenerate quadratic form on $\mathbb R^{m_2-d}$. Let $x'_c(x'') \in \mathbb R^{m_2-d}$ be the unique critical point of $\mathbb R^{m_2-d} \ni x' \mapsto q(x',x'')$, which is a linear function of $x'' \in \mathbb R^d$. We have
\begin{equation}
\label{eq3.11}
q(x',x'') = q(x'_c(x''),x'') +  q(x' - x'_c(x''),0), \quad (x',x'') \in \mathbb R^{m_2-d} \times \mathbb R^d.
\end{equation}
\end{lemm}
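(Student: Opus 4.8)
\emph{Plan of proof.} The asserted identity is nothing but the second-order Taylor expansion of $q$ in the variables $x'$ about the critical point $x'_c(x'')$, and the plan is to make this precise, using that $q$ is quadratic (so the expansion terminates) and that the $x'$-Hessian of $q$ is a constant symmetric matrix.

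First I would write $q$ in block form adapted to the splitting $x = (x',x'')$,
\[
q(x',x'') = a\, x' \cdot x' + 2\, b\, x'' \cdot x' + c\, x'' \cdot x'',
\]
where $a$ is the symmetric $(m_2-d)\times(m_2-d)$ matrix representing the quadratic form $x' \mapsto q(x',0)$, $b$ is an $(m_2-d)\times d$ matrix, and $c$ is a symmetric $d\times d$ matrix. The hypothesis that $x' \mapsto q(x',0)$ is non-degenerate means precisely that $a$ is invertible. Hence the equation $\partial_{x'} q(x',x'') = 2 a x' + 2 b x'' = 0$ has the unique solution $x'_c(x'') = -a^{-1} b\, x''$, which is a linear function of $x'' \in \mathbb R^d$, as claimed.

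Next, since $x' \mapsto q(x',x'')$ is an inhomogeneous quadratic polynomial with constant Hessian $2a$, Taylor's formula about $x'_c(x'')$ terminates at second order and gives
\[
q(x',x'') = q(x'_c(x''),x'') + (\partial_{x'} q)(x'_c(x''),x'')\cdot(x' - x'_c(x'')) + a(x'-x'_c(x''))\cdot(x'-x'_c(x'')).
\]
The linear term drops out because $x'_c(x'')$ is a critical point of $x'\mapsto q(x',x'')$, while the last term equals $q(x'-x'_c(x''),0)$ since $q(z',0) = a\, z'\cdot z'$. This is the claimed formula. Alternatively, one may simply substitute $x' = x'_c(x'') + z'$, expand by bilinearity, and observe that the cross term $2(a\, x'_c(x'') + b\, x'')\cdot z'$ vanishes identically by the definition of $x'_c(x'')$.

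I do not expect any serious obstacle here; the only points meriting a word of care are that non-degeneracy of $a$ secures simultaneously the existence and uniqueness of the critical point and its linear dependence on $x''$, and that the vanishing of the linear term in the Taylor expansion is exactly a restatement of the critical-point equation. Both verifications are short linear-algebra computations.
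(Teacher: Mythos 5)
Your proof is correct and follows essentially the same route as the paper: solve the critical point equation using non-degeneracy of $x'\mapsto q(x',0)$, then apply Taylor's formula about $x'_c(x'')$, where the linear term vanishes by criticality and the constant Hessian term is identified with $q(x'-x'_c(x''),0)$. The explicit block-matrix notation is just a more concrete rendering of the same argument.
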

\begin{proof}
We have
\begin{equation}
\label{eq3.12}
\abs{q'_{x'}(x',0)} \asymp \abs{x'},\quad x'\in \mathbb R^{m_2-d},
\end{equation}
and hence the equation
\begin{equation}
\label{eq3.13}
q'_{x'}(x',x'') = 0
\end{equation}
has a unique solution $x' = x'_c(x'') \in \mathbb R^{m_2-d}$ for each $x'' \in \mathbb R^d$, depending linearly on $x''$. By Taylor's formula we can write
\begin{multline}
\label{eq3.14}
q(x',x'') = q(x'_c(x''),x'') + \frac{1}{2} q''_{x'x'}\,(x'- x'_c(x''))\cdot (x'-x'_c(x'')) \\
= q(x'_c(x''),x'') + q(x' - x'_c(x''),0).
\end{multline}
The proof is complete.
\end{proof}

\medskip
\noindent
We get, applying Lemma \ref{lemm_Taylor} to the quadratic form $\Phi_2$ and using (\ref{eq3.10}),
\begin{equation}
\label{eq3.15}
\Phi_2(x',x'') = \Phi_2(x'_c(x''),x'') + \Phi_2(x'-x'_c(x''),0) \geq \Phi_2(x'_c(x''),x'') + \frac{1}{C}\abs{x' - x'_c(x'')}^2,
\end{equation}
and therefore,
\begin{multline}
\label{eq3.16}
\Phi_1(\varphi(x',x''))-\Phi_2(x',x'') \leq \Phi_1(\varphi(x',x'')) - \Phi_2(x'_c(x''),x'') - \frac{1}{C}\abs{x' - x'_c(x'')}^2 \\
= \Phi_1(\varphi(x'_c(x''),x'')) - \Phi_2(x'_c(x''),x'') - \frac{1}{C}\abs{x' - x'_c(x'')}^2 \leq C - \frac{1}{C}\abs{x' - x'_c(x'')}^2.
\end{multline}
Here we have also used that
\begin{equation}
\label{eq3.17}
\varphi(x',x'') = A(x',x'') + b = A(0,x'') + b = \varphi(0,x'') = \varphi(x'_c(x''),x'')
\end{equation}
is independent of $x'$, in view of (\ref{eq3.8}), as well as the fact that $\Phi_1 \circ \varphi - \Phi_2$ is bounded above on $\mathbb R^{m_2}$.

\medskip
\noindent
It follows from (\ref{eq3.16}) and (\ref{eq3.17}) that the integral in (\ref{eq3.9}) does not exceed a constant times the integral
\begin{multline}
\label{eq3.18}
\int_{\mathbb{R}^{d}} \int_{\mathbb{R}^{m_2-d}} e^{-\abs{\varphi(0,x'')-y}^2/C - \abs{x' -x'_c(x'')}^2/C}\, dx'\, dx'' \\
= \left(\int_{\mathbb R^{m_2-d}} e^{-\abs{x'}^2/C}\, dx'\right) \left(\int_{\mathbb R^{d}} e^{-\abs{\varphi(0,x'')-y}^2/C}\, dx''\right).
\end{multline}
Here we have carried out the integration with respect to $x'$. It only remains for us to show therefore that
\begin{equation}
\label{eq3.19}
\sup_{y\in\mathbb{R}^{m_1}} \int_{\mathbb{R}^{d}} e^{-\abs{A(0,x'')-y}^2/C}\, dx'' <\infty,
\end{equation}
and to this end, we observe that
\begin{equation}
\label{eq3.20}
\abs{A(0,x'') - y}^2 \geq \abs{A(0,x'') - \Pi y}^2,
\end{equation}
where $\Pi: \mathbb{R}^{m_1}\rightarrow\mathbb{R}^{m_1}$ is the orthogonal projection onto ${\rm Im}\, A \subset \mathbb R^{m_1}$. We get
\begin{multline}
\label{eq3.21}
\sup_{y\in\mathbb{R}^{m_1}} \int_{\mathbb{R}^{d}} e^{-\abs{A(0,x'')-y}^2/C}\, dx'' \leq \sup_{z''\in\mathbb{R}^{d}} \int_{\mathbb{R}^{d}} e^{-\abs{A(0,x'')-A(0,z'')}^2/C}\, dx''\\
\leq \sup_{z''\in\mathbb{R}^{d}} \int_{\mathbb{R}^{d}} e^{-\abs{x''-z''}^2/\widetilde{C}}\, dx'' < \infty.
\end{multline}
Here we have also used that
\begin{equation}
\label{eq3.22}
\abs{A(0,u'')}^2 \asymp \abs{u''}^2, \quad u''\in \mathbb R^d,
\end{equation}
in view of (\ref{eq3.8}). The proof of Theorem \ref{theo 1} is complete.

\medskip
\noindent
{\it Remark}. Let us write following (\ref{eq3.1}), for $u\in H_{\Phi_1}(\mathbb C^{n_1})$,
\begin{equation}
\label{eq3.23}
C_{\varphi}u(x) = b_{\Phi_1}^2 \int\!\!\!\int_{\Gamma} e^{2(\Psi_1(\varphi(x),\theta) - \Psi_1(y,\theta))}\, u(y)\, \frac{dy\,d\theta}{(2i)^{n_1}}.
\end{equation}
Here $\Gamma \subset \mathbb C^{2n_1}_{y,\theta}$ is the contour of integration given by the anti-diagonal $\theta = \overline{y}$ and
\begin{equation}
\label{eq3.23.1}
\varphi(x) = Ax + b, \quad x\in \mathbb C^{n_2}.
\end{equation}
The holomorphic quadratic polynomial
\begin{equation}
\label{eq3.24}
F(x,y,\theta) = \frac{2}{i}\left(\Psi_1(\varphi(x),\theta) - \Psi_1(y,\theta)\right), \quad (x,y,\theta) \in \mathbb C^{n_2}_x \times \mathbb C^{n_1}_y \times \mathbb C^{n_1}_{\theta},
\end{equation}
is a non-degenerate phase function in the sense of H\"ormander~\cite{H71},
\begin{equation}
\label{eq3.25}
{\rm rank}\, \left(F''_{\theta x}\,\,\, F''_{\theta y}\,\,\, F''_{\theta \theta}\right) = n_1,
\end{equation}
since ${\rm det}\, (\Psi_{1})''_{\theta y} \neq 0$, and the operator $C_{\varphi}$ in (\ref{eq3.23}) can therefore be regarded as a metaplectic Fourier integral operator in the complex domain associated to the complex canonical relation
\begin{equation}
\label{eq3.26}
\kappa: \mathbb C^{2n_1}\ni (y, -F'_y(x,y,\theta)) \mapsto (x,F'_x(x,y,\theta))\in \mathbb C^{2n_2},\quad F'_{\theta}(x,y,\theta) = 0.
\end{equation}
Here we may recall from~\cite{CGHS} that the canonical relation (\ref{eq3.26}) is the graph of a (complex affine) canonical transformation precisely when $n_1=n_2=n$ and
\begin{equation}
\label{eq3.27}
{\rm det}\, \begin{pmatrix}
F''_{xy} & F''_{x\theta} \\
F''_{\theta y} & F''_{\theta \theta} \\
\end{pmatrix} \neq 0 \Longleftrightarrow {\rm det}\, F''_{x\theta} \neq 0,
\end{equation}
and in view of (\ref{eq3.23.1}), (\ref{eq3.24}), the latter condition is equivalent to the invertibility of $A$. Assuming that $A$ is invertible and introducing the real linear subspace
\begin{equation}
\label{eq3.28}
\Lambda_{\Phi_1} = \left\{\left(x,\frac{2}{i}\frac{\partial \Phi_1}{\partial x}(x)\right), \, x\in \mathbb C^n\right\} \subset \mathbb C^{2n} = \mathbb C^n_x \times \mathbb C^n_{\xi},
\end{equation}
we get, after a simple computation using (\ref{eq3.23.1}), (\ref{eq3.24}), and (\ref{eq3.26}),
\begin{equation}
\label{eq3.28.1}
\kappa: \mathbb C^{2n}\ni (y, \eta) \mapsto (A^{-1}(y-b), A^t \eta) \in \mathbb C^{2n},
\end{equation}
and
\begin{equation}
\label{eq3.29}
\kappa(\Lambda_{\Phi_1}) = \Lambda_{\Phi_1 \circ \varphi}.
\end{equation}
Here we may notice that the quadratic polynomial $\Phi_1 \circ \varphi$ is strictly plurisubharmonic on $\mathbb C^n$. The mapping property (\ref{eq3.29}) of the canonical transformation in (\ref{eq3.28.1}) corresponds to the operator mapping property
\begin{equation}
\label{eq3.30}
C_{\varphi}: H_{\Phi_1}(\mathbb C^n) \rightarrow H_{\Phi_1 \circ \varphi}(\mathbb C^n),
\end{equation}
immediate in the case when $A$ is bijective. In this work, representation (\ref{eq3.23}), allowing us to view $C_{\varphi}$ as a metaplectic Fourier integral operator in the complex domain, plays a crucial role in the proof of Theorem \ref{theo 1}. As we shall see in Section \ref{sect_compact}, arguments of the complex FIO theory continue to be essential when proving Theorem \ref{theo 2}.

\section{Characterizing compact pullback operators}
\label{sect_compact}
\medskip
\noindent
The purpose of this section is to prove Theorem~\ref{theo 2}. It is trivial that (\ref{trace}) implies (\ref{compact}) and we shall first check that (\ref{compact}) implies (\ref{neg def}). To this end, proceeding as in Section \ref{sec_nec} and continuing to follow~\cite{model}, we shall consider the action of the compact operator $C_{\varphi}^*: H_{\Phi_2}(\mathbb C^{n_2}) \rightarrow H_{\Phi_1}(\mathbb C^{n_1})$ on the space of coherent states $k_{2,w}$, $w\in \mathbb C^{n_2}$, given by (\ref{eq2.6}). We have $k_{2,w} \rightarrow 0$ weakly in $H_{\Phi_2}(\mathbb C^{n_2})$, as $\abs{w} \rightarrow \infty$, in view of~\cite[Lemma 3.1]{chs}, and therefore, $C_{\varphi}^*(k_{2,w}) \rightarrow 0$ in $H_{\Phi_1}(\mathbb C^{n_1})$, as $\abs{w} \rightarrow \infty$. Recalling (\ref{eq2.10}), we obtain that
\begin{equation}
\label{eq4.1}
e^{\Phi_1(\varphi(w)) - \Phi_2(w)} \rightarrow 0 \,\,\,\wrtext{as}\,\,\, \abs{w} \rightarrow \infty.
\end{equation}
An application of Lemma \ref{bounded implies affine} gives therefore that $\varphi(x) = Ax + b$, for some complex $n_1\times n_2$ matrix $A$ and some $b\in \mathbb C^{n_1}$, and then we have in view of (\ref{eq4.1}), (\ref{eq1.5}),
\begin{equation}
\label{eq4.2}
\Phi_1(Ax) - \Phi_2(x) + 2{\rm Re}\, \left((\partial_x \Phi_1)(Ax) \cdot b\right) \rightarrow -\infty,  \,\,\,\wrtext{as}\,\,\, \abs{x} \rightarrow \infty.
\end{equation}
It is clear that (\ref{eq4.2}) holds precisely when the quadratic form $\mathbb C^{n_2} \ni x \mapsto \Phi_1(Ax) - \Phi_2(x)$ is negative definite.

\medskip
\noindent
We shall finally prove that (\ref{trace}) is implied by (\ref{neg def}). When doing so we shall first show that the composition operator $C_{\varphi}$ is bounded,
\begin{equation}
\label{eq4.3}
C_{\varphi}: H_{\Phi_1}(\mathbb C^{n_1}) \rightarrow H_{\Phi_3}(\mathbb C^{n_2}),
\end{equation}
where $\Phi_3$ is a strictly plurisubharmonic quadratic form on $\mathbb C^{n_2}$ given by
\begin{equation}
\label{eq4.4}
\Phi_3(x) = \Phi_2(x) - \delta \abs{x}^2,
\end{equation}
for $\delta > 0$ small. Here the Bargmann space $H_{\Phi_3}(\mathbb C^{n_2})$ is defined as in (\ref{eq1.1.1}). To establish the mapping property (\ref{eq4.3}), we shall apply Theorem \ref{theo 1}. In light of assumption (\ref{eq1.8}), for sufficiently small $\delta > 0$, the quadratic form $\Phi_1(Ax) - \Phi_3(x)$ is negative definite,
\begin{equation}
\label{Phi3 neg def}
    \Phi_1(Ax) - \Phi_3(x) < 0,\quad 0\neq x\in\mathbb{C}^{n_2},
\end{equation}
so for any $0\neq x\in\mathbb{C}^{n_2}$ with $\Phi_3(x)\leq 0$, we must have $Ax \neq 0$. It only remains for us to check that
\begin{equation}
\label{eq4.5}
e^{\Phi_1 \circ \varphi - \Phi_3} \in L^{\infty}(\mathbb C^{n_2}),
\end{equation}
and to this end we write using (\ref{eq1.5}),
\begin{equation}
\label{eq4.6}
\Phi_1(\varphi(x)) - \Phi_3(x) = \Phi_1(Ax) - \Phi_3(x) + 2{\rm Re}\, \left((\partial_x \Phi_1)(Ax)\cdot b\right) + \Phi_1(b).
\end{equation}
Finally, (\ref{Phi3 neg def}) tells us precisely that the principal part of the quadratic polynomial in (\ref{eq4.6}) is negative definite on $\mathbb C^{n_2}$, and therefore (\ref{eq4.5}) holds. We have thus verified the mapping property (\ref{eq4.3}) for $\Phi_3$ in (\ref{eq4.4}) with $\delta > 0$ small enough.

\medskip
\noindent
The proof of Theorem \ref{theo 2} can now be completed by an application of the following general result, established in~\cite[Proposition 3.3]{CoHiSj19},~\cite[Corollary 2.6]{AlVi18}, using the theory of metaplectic Fourier integral operators in the complex domain,~\cite[Appendix B]{CGHS}.
\begin{prop}
\label{sv prop}
Let $\Phi_2$, $\Phi_3$ be strictly plurisubharmonic quadratic forms on $\mathbb C^{n_2}$ such that
\begin{equation}
\Phi_2(x) - \Phi_3(x) \asymp \abs{x}^2, \quad x\in \mathbb C^{n_2}.
\end{equation}
Then the inclusion map: $\iota: H_{\Phi_3}(\mathbb C^{n_2}) \rightarrow H_{\Phi_2}(\mathbb C^{n_2})$ is of trace class, with the singular values $s_j(\iota)$ satisfying
\[
s_j(\iota) \leq C \exp\left(-\frac{j^{1/{n_2}}}{C}\right), \quad j=1,2,\ldots,
\]
for some $C >0$.
\end{prop}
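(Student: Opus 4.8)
The plan is to reduce the inclusion $\iota$ to an explicit diagonal model by means of a metaplectic Fourier integral operator in the complex domain, and then to diagonalize the model in a monomial basis, estimating its singular values by a direct Gaussian computation followed by an elementary counting argument.

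First I would invoke a normal form for the pair $(\Phi_2,\Phi_3)$. The hypothesis $\Phi_2-\Phi_3 \asymp \abs{x}^2$ means precisely that the quadratic form $\Phi_2-\Phi_3$ is positive definite on $\mathbb{C}^{n_2}$. Using the theory of metaplectic Fourier integral operators in the complex domain, see~\cite[Appendix B]{CGHS}, one can find a single complex linear canonical transformation $\kappa$ of $\mathbb{C}^{2n_2}$ such that $\kappa(\Lambda_{\Phi_2}) = \Lambda_{\Psi_2}$ and $\kappa(\Lambda_{\Phi_3}) = \Lambda_{\Psi_3}$, where $\Psi_2$, $\Psi_3$ are the diagonal (in particular Hermitian) strictly plurisubharmonic quadratic forms
\[
\Psi_2(x) = \sum_{j=1}^{n_2} a_j\abs{x_j}^2, \qquad \Psi_3(x) = \sum_{j=1}^{n_2} b_j\abs{x_j}^2, \qquad 0 < b_j < a_j,
\]
the strict inequalities $b_j < a_j$ reflecting the positive definiteness of $\Phi_2-\Phi_3$. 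The associated metaplectic operator $U = U_\kappa$ is a bounded invertible operator $H_{\Phi_2}(\mathbb{C}^{n_2})\to H_{\Psi_2}(\mathbb{C}^{n_2})$ (in fact unitary after normalization), whose restriction to $H_{\Phi_3}(\mathbb{C}^{n_2})$ is a bounded invertible operator onto $H_{\Psi_3}(\mathbb{C}^{n_2})$. Since $U$ conjugates $\iota$ to the model inclusion $\iota_0 : H_{\Psi_3}(\mathbb{C}^{n_2})\to H_{\Psi_2}(\mathbb{C}^{n_2})$, $u\mapsto u$, we have $s_j(\iota) \leq C' s_j(\iota_0)$ for all $j$ and some $C'>0$, and it suffices to estimate the singular values of $\iota_0$.

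Next I would diagonalize $\iota_0$. Since the weights $\Psi_2$ and $\Psi_3$ are radial in each of the variables $x_1,\dots,x_{n_2}$, the monomials $\{z^\alpha\}_{\alpha\in\mathbb{N}^{n_2}}$ form an orthogonal basis of each of $H_{\Psi_2}(\mathbb{C}^{n_2})$ and $H_{\Psi_3}(\mathbb{C}^{n_2})$, so $\iota_0$ is diagonal in this basis with singular values $s_\alpha = \norm{z^\alpha}_{H_{\Psi_2}}/\norm{z^\alpha}_{H_{\Psi_3}}$. Evaluating the one-variable Gaussian integrals $\norm{z^\alpha}^2_{H_{\Psi_j}}$ factor by factor gives
\[
s_\alpha^2 = \prod_{j=1}^{n_2}\left(\frac{b_j}{a_j}\right)^{\alpha_j+1} \leq \Lambda^{\abs{\alpha}}, \qquad \Lambda := \max_{1\leq j\leq n_2}\frac{b_j}{a_j} \in (0,1),
\]
so the singular values of $\iota_0$ decay exponentially in $\abs{\alpha}$: $s_\alpha \leq e^{-\abs{\alpha}/C_0}$ for some $C_0>0$.

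It remains to convert exponential decay in $\abs{\alpha}$ into the stated decay in the enumeration index $j$. For $0<t<1$, the estimate $s_\alpha \leq e^{-\abs{\alpha}/C_0}$ shows that the number of multi-indices $\alpha\in\mathbb{N}^{n_2}$ with $s_\alpha\geq t$ is no larger than $\#\{\alpha\in\mathbb{N}^{n_2}:\abs{\alpha}\leq C_0\log(1/t)\}$, which is $O\big((\log(1/t))^{n_2}\big)$ as $t\to 0^+$. Listing the values $s_\alpha$ in nonincreasing order as $s_1(\iota_0)\geq s_2(\iota_0)\geq\cdots$, this yields $j\leq C_1\big(1+\log(1/s_j(\iota_0))\big)^{n_2}$, hence $s_j(\iota_0)\leq C_2\exp(-j^{1/n_2}/C_2)$, and therefore $s_j(\iota)\leq C\exp(-j^{1/n_2}/C)$ for a suitable $C>0$, which is the asserted bound; summing this series shows in addition that $\iota$ is of trace class. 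The one substantive step is the first: producing a single complex canonical transformation that simultaneously diagonalizes $\Phi_2$ and $\Phi_3$, equivalently a single metaplectic operator that is simultaneously (essentially) unitary with respect to both weights. This simultaneous reduction is a complex-symplectic analogue of the Williamson normal form for a pair of real quadratic forms with one of them definite, and it is precisely the content of~\cite[Proposition 3.3]{CoHiSj19} and~\cite[Corollary 2.6]{AlVi18}; granting it, the remainder is a routine and completely explicit computation.
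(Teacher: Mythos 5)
The paper does not actually prove Proposition \ref{sv prop}: it quotes the result from \cite[Proposition 3.3]{CoHiSj19} and \cite[Corollary 2.6]{AlVi18}, and your outline is essentially a correct reconstruction of the argument given in those references --- reduce to a diagonal model via a single metaplectic Fourier integral operator, diagonalize the model inclusion in the monomial basis, and count multi-indices with $\abs{\alpha}\leq C_0\log(1/t)$ to pass from exponential decay in $\abs{\alpha}$ to the bound $s_j\leq C\exp(-j^{1/n_2}/C)$. Your Gaussian evaluation of $s_\alpha^2=\prod_j(b_j/a_j)^{\alpha_j+1}$ and the subsequent counting step are both correct; the one substantive ingredient, the simultaneous symplectic normal form for the pair $(\Phi_2,\Phi_3)$ (a complex-symplectic Williamson reduction, valid because $\Phi_2-\Phi_3$ is positive definite), is exactly what you, like the paper, defer to the cited works, so the proposal should be read as a faithful expansion of the citation rather than an independent proof.
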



\begin{thebibliography}{0}

\bibitem[AlVi18]{AlVi18} A. Aleman and J. Viola, {\it On weak and strong solution operators for evolution equations coming from quadratic operators}, J. Spectr. Theory {\bf 8} (2018), 33–-121.

\bibitem[ArTo19]{ArTo} H. Arroussi and C. Tong, {\it Weighted composition operators between large Fock spaces in several complex variables},
J. Funct. Anal. {\bf 277} (2019), 3436–-3466.

\bibitem[CaGrHiSj12]{CGHS} E. Caliceti, S. Graffi, M. Hitrik, and J. Sj\"ostrand, {\it Quadratic PT-symmetric operators with real spectrum and similarity to selfadjoint operators}, Journal of Physics A {\bf 45} (2012), 20 pp.

\bibitem[CaMcSc03]{model} B J. Carswell, B. D. MacCluer, and A. Schuster, {\it Composition operators on the Fock space}, Acta Sci. Math. {\bf 69} (2003), 871--887.

\bibitem[CoHiSj19]{CoHiSj19} L. Coburn, M. Hitrik, and J. Sj\"ostrand, {\it Positivity, complex FIOs, and Toeplitz operators},
Pure Appl. Anal. {\bf 1} (2019), 327–-357.

\bibitem[CoHiSj23]{chs} L. Coburn, M. Hitrik, and J. Sjöstrand, \emph{Characterizing boundedness of metaplectic Toeplitz operators}, Int. Math. Res. Not., to appear, \arXiv{2303.01558}.

\bibitem[CoMc95]{CoMc} C. Cowen and B. D. MacCluer, {\it Composition operators on spaces of analytic functions}, CRC Press, Boca Raton, FL, 1995.

\bibitem[Do*17]{Que17} M. L. Doan, B. Hu. L. H. Khoi, and H. Queff\'elec, {\it Approximation numbers for composition operators on spaces of entire functions}, Indag. Math. {\bf 28} (2017), 294–-305.

\bibitem[GoHe99]{GoHe} J. Gordon and H. Hedenmalm, {\it The composition operators on the space of Dirichlet series with square summable coefficients}, Michigan Math. J. {\bf 46} (1999), 313–-329.

\bibitem[Hi04]{Hi04} M. Hitrik, {\it Boundary spectral behavior for semiclassical operators in dimension one}, Int. Math. Res. Not. {\bf 64} (2004), 3417–-3438.

\bibitem[HiPrVi18]{HiPrVi} M. Hitrik, K. Pravda-Starov and J. Viola, {\it From semigroups to subelliptic estimates for quadratic operators}, Trans. Amer. Math. Soc. {\bf 370} (2018), 7391–-7415.

\bibitem[HiSj15]{HiSj15} M.~Hitrik and J.~Sj\"ostrand, \emph{Two minicourses on analytic microlocal analysis},
"Algebraic and Analytic Microlocal Analysis", Springer Proceedings in Mathematics and Statistics {\bf 269} (2018), 483--540.

\bibitem[HiSjVi13]{HiSjVi13} M. Hitrik, J. Sj\"ostrand, and J. Viola, {\it Resolvent estimates for elliptic quadratic differential operators},
Anal. PDE {\bf 6} (2013), 181–-196.

\bibitem[Hö71]{H71} L. H\"ormander, {\it Fourier integral operators {\rm I}}, Acta Math. {\bf 127} (1971), 79–-183.

\bibitem[JiPrZh17]{JiPrZh} L. Jiang, G. Prajitura, and R. Zhao, {\it Some characterizations for composition operators on the Fock spaces},
J. Math. Anal. Appl. {\bf 455} (2017), 1204–-1220.

\bibitem[Le14]{Le14} T. Le, {\it Normal and isometric weighted composition operators on the Fock space}, Bull. London Math. Soc. {\bf 46} (2014),  847–-856.

\bibitem[Le17]{Le17} T. Le, {\it Composition operators between Segal-Bargmann spaces}, J. Operator Theory {\bf 78} (2017), 135--158

\bibitem[Sj95]{Sj95} J. Sj\"ostrand, {\it Function spaces associated to global I-Lagrangian manifolds}, Structure of solutions of differential
equations, Katata/Kyoto, 1995, World Sci. Publ., River Edge, NJ (1996).

\bibitem[Vi22]{Vi22} J. Viola, {\it Norms of embeddings between quadratically weighted spaces of holomorphic functions}, \arXiv{2209.13440}.

\bibitem[Zw12]{Zw12} M. Zworski, \emph{Semiclassical Analysis}, AMS, 2012.


\end{thebibliography}
\end{document}